
\documentclass{amsart}
\usepackage{amsmath}
\usepackage{amsfonts}

\setcounter{MaxMatrixCols}{10}

\newtheorem{theorem}{Theorem}
\theoremstyle{plain}

\newtheorem{definition}{Definition}

\newtheorem{proposition}{Proposition}

\numberwithin{equation}{section}

\begin{document}
\title[]{On Pseudo-Hermitian Magnetic Curves \\
in Sasakian Manifolds}
\author{\c{S}aban G\"{u}ven\c{c}}
\address[\c{S}. G\"{u}ven\c{c} and C. \"{O}zg\"{u}r]{Balikesir University,
Department of Mathematics\\
Campus of Cagis, 10145, Balikesir, TURKEY}
\email[\c{S}. G\"{u}ven\c{c}]{sguvenc@balikesir.edu.tr}
\author{Cihan \"{O}zg\"{u}r}
\email[C.~\"{O}zg\"{u}r]{cozgur@balikesir.edu.tr}
\subjclass[2010]{Primary 53C25; Secondary 53C40, 53A04}
\keywords{Magnetic curve, slant curve, Sasakian manifold, the Tanaka-Webster
Connection.}

\begin{abstract}
We define pseudo-Hermitian magnetic curves in Sasakian manifolds endowed
with the Tanaka-Webster connection. After we give a complete classification
theorem, we construct parametrizations of pseudo-Hermitian magnetic curves
in $%
\mathbb{R}
^{2n+1}(-3)$.
\end{abstract}

\maketitle

\section{Introduction}

The study of the motion of a charged particle in a constant and
time-independent static magnetic field on a Riemannian surface is known as
the Landau--Hall problem. The main problem is to study the movement of a
charged particle moving in the Euclidean plane $\mathbb{E}^{2}$. The
solution of the Lorentz equation (called also the Newton equation)
corresponds to the motion of the particle. The trajectory of a charged
particle moving on a Riemannian manifold under the action of the magnetic
field is a very interesting problem from a geometric point of view.

Let $\left( N,g\right) $ be a Riemannian manifold, and $F$ a closed $2$%
-form, $\Phi $ the Lorentz force, which is a $(1,1)$-type tensor field, on $N
$.\ $F$ is called a\textit{\ magnetic field }if it is associated to $\Phi $
by the relation%
\begin{equation}
F(X,Y)=g(\Phi X,Y),\text{\ }  \label{F}
\end{equation}%
where $X$ and $Y$ are vector fields on $N$ (see \cite{Adachi-1996}, \ \cite%
{BRCF} and \cite{Comtet-1987}). Let $\nabla $ be the Riemannian connection
on $N$ and consider a differentiable curve $\alpha $ $:I\rightarrow N$,
where $I$ denotes an open interval of $%
\mathbb{R}
$. $\alpha $ is said to be a \textit{magnetic curve }for the magnetic field $%
F,$ if it is a solution of the Lorentz equation given by%
\begin{equation}
\nabla _{\alpha ^{\prime }(t)}\alpha ^{\prime }(t)=\Phi (\alpha ^{\prime
}(t)).  \label{Lorentz eq}
\end{equation}%
From the definition of magnetic curves, it is straightforward to see that
their speed is constant. Specifically, unit-speed magnetic curves are called 
\textit{normal magnetic curves }\cite{DIMN-2015}.

In \cite{DIMN-2015}, Dru\c{t}\u{a}-Romaniuc, Inoguchi, Munteanu and Nistor
magnetic curves studied in a Sasakian manifold. \ Magnetic curves in
cosymplectic manifolds were studied in \cite{DIMN-2016} by the same authors.
In \cite{IM-2017}, \ $3$-dimensional Berger spheres and their magnetic
curves were considered by Inoguchi and Munteanu. Magnetic trajectories of an
almost contact metric manifold were studied in \cite{JMN-2015}, \ by Jleli,
Munteanu and Nistor. The classification of all uniform magnetic trajectories
of a charged particle moving on a surface under the action of a uniform
magnetic field was obtained in \cite{Munteanu-2014}, by Munteanu.
Furthermore, normal magnetic curves in para-Kaehler manifolds were
researched in \cite{JM-2015}, by Jleli and Munteanu. In \cite{MN-2012},
Munteanu and Nistor obtained the classification of complete classification
of unit-speed Killing magnetic curves in $\mathbb{S}^{2}\times \mathbb{R}$.
Moreover, in \cite{MN-2014}, they studied magnetic curves on $\mathbb{S}%
^{2n+1}$. $3$-dimensional normal para-contact metric manifolds and their
magnetic curves of a Killing vector field were investigated in \cite%
{CMP-2015}, by Calvaruso, Munteanu and Perrone. The second author gave the
parametric equations of all normal magnetic curves in the $3$-dimensional
Heisenberg group in \cite{Ozgur-2017}. Recently, the present authors
considered slant magnetic curves in $S$-manifolds in \cite{GO-2019}.

These studies motivate us to investigate pseudo-Hermitian magnetic curves in 
$(2n+1)$-dimensional Sasakian manifolds endowed with the Tanaka-Webster
connection. In Section \ref{Preliminaries}, we summarize the fundamental
definitions and properties of Sasakian manifolds and the unique connection,
namely the Tanaka-Webster connection. We give the main classification
theorems for pseudo-Hermitian magnetic curves in Section \ref{magnetic}. We
show that a pseudo-Hermitian magnetic curve can not have osculating order
bigger than $3$. In the last section, after a brief information on $%
\mathbb{R}
^{2n+1}(-3)$, we obtain the parametric equations of pseudo-Hermitian
magnetic curves in $%
\mathbb{R}
^{2n+1}(-3)$ endowed with the Tanaka-Webster connection.

\section{Preliminaries\label{Preliminaries}}

Let $N$ be a ($2n+1$)-dimensional Riemannian manifold satisfying the
following equations%
\begin{equation}
\begin{array}{cccc}
\phi ^{2}(X)=-X+\eta (X)\xi , & \eta (\xi )=1, & \phi (\xi )=0, & \eta \circ
\phi =0,%
\end{array}
\label{eq1}
\end{equation}%
\begin{equation}
\begin{array}{cc}
g(X,\xi )=\eta (X), & g(X,Y)=g(\phi X,\phi Y)+\eta (X)\eta (Y),%
\end{array}
\label{eq2}
\end{equation}%
for all vector fields $X,Y$ on $N$, where $\phi $ is a ($1,1$)-type tensor
field, $\eta $ is a $1$-form, $\xi $ is a vector field and $g$ is a
Riemannian metric on $N$. In this case, $\left( N,\phi ,\xi ,\eta ,g\right) $
is said to be an \textit{almost contact metric manifold} \cite{Blair}.
Moreover, if $d\eta (X,Y)=\Phi (X,Y)$, where $\Phi (X,Y)=g(X,\phi Y)$ is the 
\textit{fundamental }$2$\textit{-form} of the manifold, then $N$ is said to
be a \textit{contact metric manifold} \cite{Blair}.

Furthermore, if we denote the the Nijenhuis torsion of $\phi $ by $[\phi
,\phi ]$, for all $X,Y$ $\in \chi (N)$, the condition given by%
\begin{equation*}
\lbrack \phi ,\phi ](X,Y)=-2d\eta (X,Y)\xi ,
\end{equation*}%
is called the \textit{normality condition} of the almost contact metric
structure. An almost contact metric manifold turns into a \textit{Sasakian
manifold }if the normality condition is satisfied\textit{\ }\cite{Blair}.

From Lie differentiation operator in the characteristic direction $\xi ,$
the operator $h$ is defined by 
\begin{equation*}
h=\frac{1}{2}L_{\xi }\phi .
\end{equation*}%
It is directly found that the structural operator $h$ is symmetric. It also
validates the equations below, where we donote the Levi-Civita connection by 
$\nabla $:%
\begin{equation}
\begin{array}{cc}
h\xi =0, & h\phi =-\phi h,%
\end{array}
\label{structural}
\end{equation}%
(see \cite{Blair}).

If we denote the Tanaka-Webster connection on $N$ by $\widehat{\nabla }$ (%
\cite{Tanaka}, \cite{Webster}) , then we have%
\begin{equation*}
\widehat{\nabla }_{X}Y=\nabla _{X}Y+\eta (X)\phi Y+(\widehat{\nabla }%
_{X}\eta )(Y)\xi -\eta (Y)\nabla _{X}\xi 
\end{equation*}%
for all vector fields $X,Y$ on $N$. By the use of equations (\ref{structural}%
), the Tanaka-Webster connection can be calculated as%
\begin{equation}
\widehat{\nabla }_{X}Y=\nabla _{X}Y+\eta (X)\phi Y+\eta (Y)(\phi X+\phi
hX)-g(\phi X+\phi hX,Y)\xi .  \label{tanaka}
\end{equation}%
The torsion of the Tanaka-Webster connection is%
\begin{equation}
\widehat{T}(X,Y)=2g(X,\phi Y)\xi +\eta (Y)\phi hX-\eta (X)\phi hY.
\label{torsion}
\end{equation}%
In a Sasakian manifold, from the fact that $h=0$ (see \cite{Blair}),
equations (\ref{tanaka}) and (\ref{torsion}) can be rewritten as:%
\begin{equation}
\widehat{\nabla }_{X}Y=\nabla _{X}Y+\eta (X)\phi Y+\eta (Y)\phi X-g(\phi
X,Y)\xi ,\text{\ \ }  \label{tanaka-webster}
\end{equation}%
\begin{equation*}
\widehat{T}(X,Y)=2g(X,\phi Y)\xi .
\end{equation*}%
The following proposition states why the Tanaka-Webster connection is unique:

\begin{proposition}
\cite{Tanno-89} The Tanaka-Webster connection on a contact Riemannian
manifold $N=(N,\phi ,\xi ,\eta ,g)$ is the unique linear connection
satisfying the following four conditions:

(a) $\ \ \ \widehat{\nabla }\eta =0$, $\widehat{\nabla }\xi =0;$

(b) \ \ \ $\widehat{\nabla }g=0$, $\widehat{\nabla }\phi =0;$

(c) \ \ $\widehat{T}(X,Y)=-\eta (\left[ X,Y\right] )\xi $, $\ \ \forall
X,Y\in D;$

(d) \ \ $\widehat{T}(\xi ,\phi Y)=-\phi \widehat{T}(\xi ,Y),$ \ $\forall
Y\in D.$
\end{proposition}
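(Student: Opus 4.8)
The plan is to split the proposition into an existence part and a uniqueness part. For existence I would verify that the connection $\widehat{\nabla }$ written out in (\ref{tanaka}) satisfies (a)--(d). Using $\phi ^{2}X=-X+\eta (X)\xi $, $\eta \circ \phi =0$, $g(X,\xi )=\eta (X)$, the contact metric identity $\nabla _{X}\xi =-\phi X-\phi hX$, and $h\phi =-\phi h$ together with the symmetry of $h$, one gets $\widehat{\nabla }_{X}\xi =0$ and $(\widehat{\nabla }_{X}\eta )(Y)=X\eta (Y)-\eta (\widehat{\nabla }_{X}Y)=0$, which is (a); the two identities in (b) follow from $\nabla g=0$ and the structure equation for $\nabla \phi $ after the correction terms in (\ref{tanaka}) cancel in pairs; and (c), (d) can be read off from the torsion formula (\ref{torsion}), since for $X,Y\in D$ one has $\widehat{T}(X,Y)=2g(X,\phi Y)\xi =-\eta ([X,Y])\xi $ (because $X\eta (Y)=Y\eta (X)=0$), while for $Y\in D$ one gets $\widehat{T}(\xi ,Y)=-\phi hY$ and hence $\widehat{T}(\xi ,\phi Y)=-hY=-\phi \widehat{T}(\xi ,Y)$. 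I expect this part to be entirely routine.

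The real content is uniqueness, and here the plan is to show that (a)--(d) force the value of $\widehat{\nabla }_{X}Y$ on every pair of vector fields, working with the pointwise splitting $TN=D\oplus \mathbb{R}\xi $. From (a) one immediately has $\widehat{\nabla }_{X}\xi =0$ for all $X$, and also $\widehat{\nabla }_{X}Y\in D$ whenever $Y$ is a section of $D$, because $\eta (\widehat{\nabla }_{X}Y)=X\eta (Y)-(\widehat{\nabla }_{X}\eta )(Y)=0$. Next, for $X,Y,Z\in D$, condition (c) gives the torsion identity $\widehat{\nabla }_{X}Y-\widehat{\nabla }_{Y}X=[X,Y]-\eta ([X,Y])\xi $, and since $g(\xi ,Z)=0$ the usual Koszul manipulation for the metric connection $\widehat{\nabla }$ (from (b)) makes every torsion term drop out, so that $g(\widehat{\nabla }_{X}Y,Z)=g(\nabla _{X}Y,Z)$ for all $Z\in D$; as $\widehat{\nabla }_{X}Y\in D$, this identifies $\widehat{\nabla }_{X}Y$ with the $D$-component of the Levi-Civita derivative $\nabla _{X}Y$.

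The only case left is $\widehat{\nabla }_{\xi }Y$ with $Y\in D$, and this is where I expect the actual work. Because $\widehat{\nabla }\phi =0$ and $\widehat{\nabla }\xi =0$, one may write $\widehat{T}(\xi ,Y)=\widehat{\nabla }_{\xi }Y-[\xi ,Y]$ and $\widehat{T}(\xi ,\phi Y)=\phi \widehat{\nabla }_{\xi }Y-[\xi ,\phi Y]$, so condition (d) becomes the equation $2\phi \widehat{\nabla }_{\xi }Y=[\xi ,\phi Y]+\phi [\xi ,Y]$. I would then check that the right-hand side lies in $D$ (a short computation with $d\eta =\Phi $ shows its $\eta $-component vanishes) and that $\phi $ restricted to $D$ is invertible with inverse $-\phi $, which forces $\widehat{\nabla }_{\xi }Y=-\tfrac{1}{2}\phi \big([\xi ,\phi Y]+\phi [\xi ,Y]\big)$. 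Together with $\widehat{\nabla }_{\xi }\xi =0$ and $C^{\infty }(N)$-linearity in the first argument, this determines $\widehat{\nabla }$ completely, which proves uniqueness.

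The main obstacle is precisely the behaviour in the $\xi $-direction: conditions (c) and (d) on their own seem to leave $\widehat{T}(\xi ,\cdot )$ underdetermined, since (c) is silent there and (d) is only a symmetry relation. The key point to get right is that $\widehat{\nabla }\phi =0$ from (b), combined with $\widehat{\nabla }\xi =0$, promotes (d) to the explicit formula for $\widehat{\nabla }_{\xi }Y$ above; I would also need to be careful with the pointwise decomposition $TN=D\oplus \mathbb{R}\xi $ and to verify that every expression to which $\phi ^{-1}$ is applied genuinely lies in $D$.
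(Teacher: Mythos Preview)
The paper does not prove this proposition at all: it is quoted, with the citation \cite{Tanno-89}, as a known characterization of the Tanaka--Webster connection and is used only as background in Section~\ref{Preliminaries}. There is therefore no proof in the paper to compare your proposal against.

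That said, your plan is the standard one and is essentially correct. The existence check from the explicit formula (\ref{tanaka}) and the torsion expression (\ref{torsion}) is routine, exactly as you describe; the one place where ``the correction terms cancel in pairs'' hides a genuine computation is $\widehat{\nabla}\phi=0$, which in a general contact metric manifold requires the full formula for $(\nabla_X\phi)Y$ (not just the Sasakian identity), so you should be prepared to invoke that. For uniqueness your decomposition $TN=D\oplus\mathbb{R}\xi$ and the Koszul argument on $D\times D$ are right, and your derivation of $2\phi\widehat{\nabla}_\xi Y=[\xi,\phi Y]+\phi[\xi,Y]$ from (a), (b), (d) is exactly the step that pins down the $\xi$-direction; the verifications that the right-hand side lies in $D$ and that $\widehat{\nabla}_\xi Y\in D$ (so $\phi|_D$ can be inverted) are the only places where care is needed, and you have correctly flagged them.
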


\bigskip

\section{Magnetic Curves with respect to the Tanaka-Webster Connection\label%
{magnetic}}

Let $\left( N,\phi ,\xi ,\eta ,g\right) $ be an $n$-dimensional Riemannian
manifold and $\alpha :I\rightarrow N$ a curve parametrized by arc-length. If
there exists $g$-orthonormal vector fields $E_{1},E_{2},...,E_{r}$ along $%
\alpha $ such that 
\begin{eqnarray}
E_{1} &=&\alpha ^{\prime },  \notag \\
\widehat{\nabla }_{E_{1}}E_{1} &=&\widehat{k}_{1}E_{2},  \notag \\
\widehat{\nabla }_{E_{1}}E_{2} &=&-\widehat{k}_{1}E_{1}+\widehat{k}_{2}E_{3},
\label{Frenetequations} \\
&&...  \notag \\
\widehat{\nabla }_{E_{1}}E_{r} &=&-\widehat{k}_{r-1}E_{r-1},  \notag
\end{eqnarray}%
then $\alpha $ is called a \textit{Frenet curve for }$\widehat{\nabla }$ 
\textit{of osculating order }$r$ , $\left( 1\leq r\leq n\right) $. Here $%
\widehat{k}_{1},...,\widehat{k}_{r-1}$ are called \textit{pseudo-Hermitian
curvature functions of }$\alpha $ and these functions are positive valued on 
$I$. A \textit{geodesic} \textit{for }$\widehat{\nabla }$ (or \textit{%
pseudo-Hermitian geodesic}) is a Frenet curve of osculating order $1$ for $%
\widehat{\nabla }$. If $r=2$ and a $\widehat{k}_{1}$ is a constant, then $%
\alpha $ is called a \textit{pseudo-Hermitian circle.} A \textit{%
pseudo-Hermitian helix of order }$r$, $r\geq 3$, is a Frenet curve for $%
\widehat{\nabla }$ of osculating order $r$ with non-zero positive constant
pseudo-Hermitian curvatures $\widehat{k}_{1},...,\widehat{k}_{r-1}$. If we
shortly state \textit{pseudo-Hermitian helix,} we mean its osculating order
is $3$ \cite{CL}.

Let $N=\left( N^{2n+1},\phi ,\xi ,\eta ,g\right) $ be a Sasakian manifold
endowed with the Tanaka-Webster connection $\widehat{\nabla }.$ Let us
denote the the fundamental $2$-form of $N$ by $\Omega $. Then, we have%
\begin{equation}
\Omega (X.Y)=g(X,\phi Y),  \label{omega}
\end{equation}%
(see \cite{Blair}). From the fact that $N$ is a Sasakian manifold, we have $%
\Omega =d\eta $. Hence, $d\Omega =0$, i.e., it is closed. Thus, we can
define a magnetic field $F_{q}$ on $N$  by 
\begin{equation*}
F_{q}(X,Y)=q\Omega (X.Y),
\end{equation*}%
namely the \textit{contact magnetic field with strength} $q$, where $X,Y\in
\chi (N)$ and $q\in 
\mathbb{R}
$ \cite{JMN-2015}. We will assume that $q\neq 0$ to avoid the absence of the
strength of magnetic field (see \cite{CFG-2009} and \cite{DIMN-2015}).

From (\ref{F}) and (\ref{omega}), the Lorentz force $\Phi $ associated to
the contact magnetic field $F_{q}$ can be written as%
\begin{equation*}
\Phi =-q\phi .
\end{equation*}%
So the Lorentz equation (\ref{Lorentz eq}) is%
\begin{equation}
\nabla _{E_{1}}E_{1}=-q\phi E_{1},  \label{magneticLC}
\end{equation}%
where $\alpha :I\rightarrow N$ is a curve with arc-length parameter, $%
E_{1}=\alpha ^{\prime }$ is the tangent vector field and $\nabla $ is the
Levi-Civita connection (see \cite{DIMN-2015} and \cite{JMN-2015}). By the
use of equations (\ref{tanaka-webster}) and (\ref{magneticLC}), we have%
\begin{equation}
\widehat{\nabla }_{E_{1}}E_{1}=\left[ -q+2\eta (E_{1})\right] \phi E_{1}.
\label{lorentz-tw}
\end{equation}

\begin{definition}
Let $\alpha :I\rightarrow N$ be a unit-speed curve in a Sasakian manifold $%
N=\left( N^{2n+1},\phi ,\xi ,\eta ,g\right) $ endowed with the
Tanaka-Webster connection $\widehat{\nabla }$. Then it is called a \textit{%
normal magnetic curve with respect to} \textit{the Tanaka-Webster connection}
$\widehat{\nabla }$ (or shortly \textit{pseudo-Hermitian magnetic}) if it
satisfies equation (\ref{lorentz-tw}).
\end{definition}

If $\eta (E_{1})=\cos \theta $ is a constant, then $\alpha $ is called 
\textit{slant} \cite{CIL}. From the definition of pseudo-Hermitian magnetic
curves, we have the following direct result as in the Levi-Civita case:

\begin{proposition}
\label{prop-slant}If $\alpha $ is pseudo-Hermitian magnetic curve in a
Sasakian manifold, then it is slant.
\end{proposition}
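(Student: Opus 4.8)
The plan is to show directly that the function $t\mapsto\eta(E_{1}(t))$ is constant along $\alpha$; once this is established, the Cauchy--Schwarz inequality $\left\vert \eta(E_{1})\right\vert =\left\vert g(E_{1},\xi)\right\vert \leq \left\Vert E_{1}\right\Vert \left\Vert \xi \right\Vert =1$ allows us to write $\eta(E_{1})=\cos \theta$ for a constant angle $\theta$, which is exactly the slant condition.

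First I would differentiate $\eta(E_{1})$ along $\alpha$. The essential ingredient is part (a) of the Tanno proposition, namely $\widehat{\nabla}\eta =0$ (equivalently $\widehat{\nabla}g=0$ together with $\widehat{\nabla}\xi =0$). Writing $\eta(E_{1})=g(E_{1},\xi)$ and using metric compatibility of $\widehat{\nabla}$ and $\widehat{\nabla}_{E_{1}}\xi =0$, one obtains
$$\frac{d}{dt}\eta(E_{1})=g\bigl(\widehat{\nabla}_{E_{1}}E_{1},\xi\bigr)=\eta\bigl(\widehat{\nabla}_{E_{1}}E_{1}\bigr).$$
Next I substitute the pseudo-Hermitian Lorentz equation \eqref{lorentz-tw}, which gives $\widehat{\nabla}_{E_{1}}E_{1}=\left[ -q+2\eta(E_{1})\right] \phi E_{1}$, and hence
$$\frac{d}{dt}\eta(E_{1})=\left[ -q+2\eta(E_{1})\right] \eta(\phi E_{1})=0,$$
since $\eta\circ\phi=0$ by \eqref{eq1}. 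Therefore $\eta(E_{1})$ is constant on $I$, and $\alpha$ is slant.

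There is essentially no real obstacle here: the result is an immediate consequence of $\widehat{\nabla}\eta=0$ and $\eta\circ\phi=0$, and the only point requiring a word of care is the appeal to $\left\vert \eta(E_{1})\right\vert \leq 1$ needed to legitimately introduce the angle $\theta$. As an alternative one may run the identical computation at the level of the Levi-Civita connection, using $\nabla_{X}\xi=-\phi X$ (valid in a Sasakian manifold, where $h=0$) together with \eqref{magneticLC}: both $g(\nabla_{E_{1}}E_{1},\xi)$ and $g(E_{1},\nabla_{E_{1}}\xi)$ vanish because $\eta\circ\phi=0$ and $\phi$ is skew-symmetric with respect to $g$, giving the same conclusion.
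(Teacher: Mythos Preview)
Your proof is correct and follows essentially the same route as the paper: differentiate $g(E_{1},\xi)$ using $\widehat{\nabla}g=0$ and $\widehat{\nabla}\xi=0$, substitute the Lorentz equation \eqref{lorentz-tw}, and conclude via $\eta\circ\phi=0$. Your added remark on Cauchy--Schwarz to justify writing the constant as $\cos\theta$, and the alternative computation with the Levi-Civita connection, are minor embellishments not present in the paper but do not change the underlying argument.
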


\begin{proof}
Let $\alpha :I\rightarrow N$ be pseudo-Hermitian magnetic. Then, we find%
\begin{eqnarray*}
\frac{d}{dt}g(E_{1},\xi ) &=&g(\widehat{\nabla }_{E_{1}}E_{1},\xi )+g(E_{1},%
\widehat{\nabla }_{E_{1}}\xi ) \\
&=&g(\left[ -q+2\eta (E_{1})\right] \phi E_{1},\xi ) \\
&=&0.
\end{eqnarray*}%
So we obtain%
\begin{equation*}
\eta (E_{1})=\cos \theta =constant,
\end{equation*}%
which completes the proof.
\end{proof}

As a result, we can rewrite equation (\ref{lorentz-tw}) as%
\begin{equation}
\widehat{\nabla }_{E_{1}}E_{1}=\left( -q+2\cos \theta \right) \phi E_{1},
\label{lorentz-tw2}
\end{equation}%
where $\theta $ is the contact angle of $\alpha $. Now, we can state the
following theorem:

\begin{theorem}
\label{Theorem1}Let $\left( N^{2n+1},\phi ,\xi ,\eta ,g\right) $ be a
Sasakian manifold endowed with the Tanaka-Webster connection $\widehat{%
\nabla }$. Then $\alpha :I\rightarrow N$ is pseudo-Hermitian magnetic if and
only if it belongs to the following list:

(a) pseudo-Hermitian non-Legendre slant geodesics (including
pseudo-Hermitian geodesics as integral curves of $\xi $);

(b) pseudo-Hermitian Legendre circles with $\widehat{k}_{1}=\left\vert
q\right\vert $ and having the Frenet frame field (for $\widehat{\nabla }$)%
\begin{equation*}
\left\{ E_{1},-sgn(q)\phi E_{1}\right\} ;
\end{equation*}

(c) pseudo-Hermitian slant helices with 
\begin{equation*}
\widehat{k}_{1}=\left\vert -q+2\cos \theta \right\vert \sin \theta ,\text{ }%
\widehat{k}_{2}=\left\vert -q+2\cos \theta \right\vert \varepsilon \cos
\theta 
\end{equation*}%
and having the Frenet frame field (for $\widehat{\nabla }$)%
\begin{equation*}
\left\{ E_{1},\frac{\delta }{\sin \theta }\phi E_{1},\frac{\varepsilon }{%
\sin \theta }\left( \xi -\cos \theta E_{1}\right) \right\} ,
\end{equation*}%
where $\delta =sgn(-q+2\cos \theta )$, $\varepsilon =sgn(\cos \theta )$ and $%
\cos \theta \neq \frac{q}{2}$.
\end{theorem}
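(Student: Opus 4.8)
The plan is to analyze equation (\ref{lorentz-tw2}) directly by differentiating repeatedly with respect to the Tanaka-Webster connection $\widehat{\nabla }$ and matching the resulting identities against the Frenet equations (\ref{Frenetequations}). First I would write $c:=-q+2\cos \theta $, so that $\widehat{\nabla }_{E_{1}}E_{1}=c\,\phi E_{1}$. Splitting into the cases $c=0$ and $c\neq 0$ is the natural first dichotomy: if $c=0$ the curve is a pseudo-Hermitian geodesic, and one checks that this forces either $\cos \theta =q/2$ with $\sin\theta\neq0$ (a non-Legendre slant geodesic) or, in the degenerate reading, the integral curves of $\xi$; this yields case (a). So the substantive work is the case $c\neq 0$.

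For $c\neq 0$, since $\|E_1\|=1$ and $\widehat{\nabla}$ is a metric connection (Proposition of Tanno, part (b)), the vector $c\,\phi E_1$ is orthogonal to $E_1$, and $\|\phi E_1\|^2 = 1-\eta(E_1)^2 = \sin^2\theta$ by (\ref{eq2}) and Proposition \ref{prop-slant}. Here a second dichotomy appears: $\sin\theta=0$ versus $\sin\theta\neq0$. If $\sin\theta=0$ then $\phi E_1=0$, forcing $\widehat{\nabla}_{E_1}E_1=0$, i.e.\ a geodesic — but this contradicts $c\neq0$ unless we are again in case (a); so assume $\sin\theta\neq 0$. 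Then $\widehat k_1 = \|\widehat{\nabla}_{E_1}E_1\| = |c|\sin\theta$, and $E_2 = \frac{c\,\phi E_1}{|c|\sin\theta} = \frac{\delta}{\sin\theta}\phi E_1$ with $\delta=\mathrm{sgn}(c)$, as claimed. The key computational input now is that $\widehat{\nabla}\phi=0$ and $\widehat{\nabla}\xi=0$ (Tanno's conditions (a),(b)), so differentiating $E_2$ along $E_1$ gives $\widehat{\nabla}_{E_1}E_2 = \frac{\delta}{\sin\theta}\,\phi(\widehat{\nabla}_{E_1}E_1) = \frac{\delta}{\sin\theta}\,\phi(c\,\phi E_1) = \frac{c\delta}{\sin\theta}\phi^2 E_1 = -\frac{|c|}{\sin\theta}(E_1-\cos\theta\,\xi)$ using (\ref{eq1}) and $\eta(E_1)=\cos\theta$. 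Comparing with the second Frenet equation $\widehat{\nabla}_{E_1}E_2=-\widehat k_1 E_1+\widehat k_2 E_3$ and subtracting the $E_1$-component $-\widehat k_1 E_1 = -|c|\sin\theta\,E_1$, the leftover is $\widehat k_2 E_3 = \frac{|c|\cos\theta}{\sin\theta}(\xi-\cos\theta\,E_1)$; since $\|\xi-\cos\theta\,E_1\|^2=1-\cos^2\theta=\sin^2\theta$, this yields $\widehat k_2 = |c|\,\varepsilon\cos\theta$ with $\varepsilon=\mathrm{sgn}(\cos\theta)$ and $E_3=\frac{\varepsilon}{\sin\theta}(\xi-\cos\theta\,E_1)$, matching (c). Finally I would differentiate $E_3$ once more, again using $\widehat{\nabla}\xi=0$ and the first Frenet equation, to get $\widehat{\nabla}_{E_1}E_3 = -\frac{\varepsilon\cos\theta}{\sin\theta}\,\widehat{\nabla}_{E_1}E_1 = -\frac{\varepsilon\cos\theta}{\sin\theta}c\,\phi E_1 = -|c|\varepsilon\cos\theta\cdot\frac{\delta}{\sin\theta}\phi E_1\cdot\mathrm{sgn}(\cdot)$, which one checks equals exactly $-\widehat k_2 E_2$; this closes the Frenet system at order $3$, proving the osculating order cannot exceed $3$ and confirming all three vector fields are indeed a Frenet frame. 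The converse — that each curve in the list satisfies (\ref{lorentz-tw2}) — is immediate from reading the same computations backwards.

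The main obstacle I anticipate is purely bookkeeping: tracking the signs $\delta$, $\varepsilon$, and the absolute values consistently through the normalizations, and handling the boundary/degenerate sub-cases ($\cos\theta=q/2$, $\sin\theta=0$, $\cos\theta=0$) cleanly so that the trichotomy (a)/(b)/(c) is genuinely exhaustive and non-overlapping. In particular, case (b) is exactly the specialization $\cos\theta=0$ (Legendre) with $\sin\theta=1$, where $E_3$ is not defined because $\widehat k_2=0$; one must verify separately that there $\widehat k_1=|q|$ is constant, so the curve is a pseudo-Hermitian circle rather than a helix, and that the frame reduces to $\{E_1,-\mathrm{sgn}(q)\phi E_1\}$. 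No step is conceptually deep — the only structural facts needed are $\widehat{\nabla}\phi=0$, $\widehat{\nabla}\xi=0$, $\widehat{\nabla}g=0$, relation (\ref{eq1})–(\ref{eq2}), and Proposition \ref{prop-slant} — so the real care goes into the case analysis rather than any single calculation.
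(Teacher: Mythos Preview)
Your proposal is correct and follows essentially the same approach as the paper's proof: both differentiate the Lorentz equation repeatedly using $\widehat{\nabla}\phi=0$, $\widehat{\nabla}\xi=0$, and $\phi^{2}=-\mathrm{Id}+\eta\otimes\xi$, match the result against the Frenet equations to extract $\widehat{k}_{1}$, $\widehat{k}_{2}$ and the frame, and then verify $\widehat{k}_{3}=0$. The only minor organizational difference is that the paper isolates the order-$2$ case first (deriving $\cos\theta=0$ by differentiating $\eta(E_{2})=0$), whereas you read it off directly as the degeneration $\cos\theta=0$ of your formula $\widehat{k}_{2}=|c|\,\varepsilon\cos\theta$; the substance is identical.
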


\begin{proof}
Let us assume that $\alpha :I\rightarrow N$ is a normal magnetic curve with
respect to $\widehat{\nabla }$. Consequently, equation (\ref{lorentz-tw2})
must be validated. Let us assume $\widehat{k}_{1}=0$. Hence, we have $\cos
\theta =\frac{q}{2}$ or $\phi E_{1}=0$. If $\cos \theta =\frac{q}{2}$, then $%
\alpha $ is a pseudo-Hermitian non-Legendre slant geodesic. Otherwise, $\phi
E_{1}=0$ gives us $E_{1}=\pm \xi $. Thus, $\alpha $ is a pseudo-Hermitian
geodesic as an integral curve of $\pm \xi $. So we have just proved that $%
\alpha $ belongs to (a) from the list, if the osculating order $r=1$. Now,
let $\widehat{k}_{1}\neq 0$. From equation (\ref{lorentz-tw2}) and the
Frenet equations for $\widehat{\nabla }$, we find 
\begin{equation}
\widehat{\nabla }_{E_{1}}E_{1}=\widehat{k}_{1}E_{2}=\left( -q+2\cos \theta
\right) \phi E_{1}  \label{3.5}
\end{equation}%
Since $E_{1}$ is unit, equation (\ref{eq2}) gives us 
\begin{equation}
g(\phi E_{1},\phi E_{1})=\sin ^{2}\theta .  \label{fi}
\end{equation}%
By the use of (\ref{3.5}) and (\ref{fi}), we obtain%
\begin{equation}
\widehat{k}_{1}=\left\vert -q+2\cos \theta \right\vert \sin \theta ,
\label{3.6}
\end{equation}%
which is a constant. Let us denote $\delta =sgn(-q+2\cos \theta )$. From (%
\ref{3.6}), we can write%
\begin{equation}
\phi E_{1}=\delta \sin \theta E_{2}.  \label{3.7}
\end{equation}%
Let us assume $\widehat{k}_{2}=0$, that is, $r=2$. From the fact that $%
\widehat{k}_{1}$ is a constant, $\alpha $ is a pseudo-Hermitian circle. (\ref%
{3.7}) gives us%
\begin{equation*}
\eta \left( \phi E_{1}\right) =0=\delta \sin \theta \eta \left( E_{2}\right)
,
\end{equation*}%
which is equivalent to%
\begin{equation*}
\eta \left( E_{2}\right) =0.
\end{equation*}%
Differentiating this last equation with respect to $\widehat{\nabla }$, we
obtain 
\begin{equation*}
\widehat{\nabla }_{E_{1}}\eta \left( E_{2}\right) =0=g\left( \widehat{\nabla 
}_{E_{1}}E_{2},\xi \right) +g\left( E_{2},\widehat{\nabla }_{E_{1}}\xi
\right) .
\end{equation*}%
Since $\widehat{\nabla }\xi =0$ and $r=2$, we have%
\begin{equation*}
g(-\widehat{k}_{1}E_{1},\xi )=0,
\end{equation*}%
that is, $\eta (E_{1})=0$. Hence, $\alpha $ is Legendre and $\cos \theta =0$%
. From equation (\ref{3.6}), we get $\widehat{k}_{1}=\left\vert q\right\vert 
$. In this case, we also obtain $\delta =-sgn(q)$ and $E_{2}=-sgn(q)\phi
E_{1}$. We have proved that $\alpha $ belongs to (b) from the list, if the
osculating order $r=2$. Now, let us assume $\widehat{k}_{2}\neq 0$. If we
use $\widehat{\nabla }\phi =0$, we calculate 
\begin{equation}
\widehat{\nabla }_{E_{1}}\phi E_{1}=\widehat{k}_{1}\phi E_{2}.  \label{***}
\end{equation}%
From (\ref{eq1}) and (\ref{3.7}), we find%
\begin{equation}
\phi ^{2}E_{1}=-E_{1}+\cos \theta \xi =\delta \sin \theta \phi E_{2},
\label{3.8}
\end{equation}%
which gives us%
\begin{equation*}
\phi E_{2}=\frac{\delta }{\sin \theta }\left( -E_{1}+\cot \theta \xi \right)
.
\end{equation*}%
So equation (\ref{***}) becomes%
\begin{equation}
\widehat{\nabla }_{E_{1}}\phi E_{1}=\widehat{k}_{1}\frac{\delta }{\sin
\theta }\left( -E_{1}+\cot \theta \xi \right) .  \label{3.10}
\end{equation}%
If we differentiate equation (\ref{3.7}) with respect to $\widehat{\nabla }$%
, we also have%
\begin{eqnarray}
\widehat{\nabla }_{E_{1}}\phi E_{1} &=&\delta \sin \theta \widehat{\nabla }%
_{E_{1}}E_{2}  \label{3.11} \\
&=&\delta \sin \theta \left( -\widehat{k}_{1}E_{1}+\widehat{k}%
_{2}E_{3}\right) .  \notag
\end{eqnarray}%
By the use of (\ref{3.10}) and (\ref{3.11}), we obtain%
\begin{equation}
\widehat{k}_{1}\cot \theta \left( \xi -\cos \theta E_{1}\right) =\widehat{k}%
_{2}\sin \theta E_{3}.  \label{3.12}
\end{equation}%
One can easily see that%
\begin{equation*}
g(\xi -\cos \theta E_{1},\xi -\cos \theta E_{1})=\sin ^{2}\theta .
\end{equation*}%
From (\ref{3.12}), we calculate%
\begin{equation*}
\widehat{k}_{2}=\left\vert -q+2\cos \theta \right\vert \varepsilon \cos
\theta ,
\end{equation*}%
where we denote $\varepsilon =sgn(\cos \theta ).$ As a result, we get%
\begin{equation}
E_{3}=\frac{\varepsilon }{\sin \theta }\left( \xi -\cos \theta E_{1}\right) ,
\label{*}
\end{equation}%
\begin{equation*}
E_{2}=\frac{\delta }{\sin \theta }\phi E_{1}.
\end{equation*}%
If we differentiate (\ref{*}) with respect to $\widehat{\nabla }$, since $%
\phi E_{1}\parallel E_{2}$, we find $\widehat{k}_{3}=0$. So we have just
completed the proof of (c). Considering the fact that $\widehat{k}_{3}=0$,
the Gram-Schmidt process ends. Thus, the list is complete.

Conversely, let $\alpha :I\rightarrow N$ belong to the given list. It is
easy to show that equation (\ref{lorentz-tw2}) is satisfied. Hence, $\alpha $
is pseudo-Hermitian magnetic.
\end{proof}

A pseudo-Hermitian geodesic is said to be a pseudo-Hermitian $\phi $-curve
if $sp\left\{ E_{1},\phi E_{1},\xi \right\} $ is $\phi $-invariant. A Frenet
curve of osculating order $r=2$ is said to be a pseudo-Hermitian $\phi $%
-curve if $sp\left\{ E_{1},E_{2},\xi \right\} $ is $\phi $-invariant. A
Frenet curve of osculating order $r\geq 3$ is said to be a pseudo-Hermitian $%
\phi $-curve if $sp\left\{ E_{1},E_{2},...,E_{r}\right\} $ is $\phi $%
-invariant.

\begin{theorem}
Let $\alpha :I\rightarrow N$ be a pseudo-Hermitian $\phi $-helix of order $%
r\leq 3$, where $N=\left( N^{2n+1},\phi ,\xi ,\eta ,g\right) $ is a Sasakian
manifold endowed with the Tanaka-Webster connection $\widehat{\nabla }$.
Then:

(a) If $\cos \theta =\pm 1$, then it is an integral curve of $\xi $, i.e. a
pseudo-Hermitian geodesic and it is a pseudo-Hermitian magnetic curve for $%
F_{q}$ for arbitrary $q$;

(b) If $\cos \theta \notin \left\{ -1,0,1\right\} $ and $\widehat{k}_{1}=0$,
then it is a pseudo-Hermitian non-Legendre slant geodesic and it is a
pseudo-Hermitian magnetic curve for $F_{2\cos \theta };$

(c) If $\cos \theta =0$ and $\widehat{k}_{1}\neq 0$, i.e. $\alpha $ is a
Legendre $\phi $-curve, then it is a pseudo-Hermitian magnetic circle
generated by $F_{-\delta \widehat{k}_{1}}$, where $\delta =sgn(g(\phi
E_{1},E_{2}))$;

(d) If $\cos \theta =\frac{\varepsilon \widehat{k}_{2}}{\sqrt{\widehat{k}%
_{1}^{2}+\widehat{k}_{2}^{2}}}$ and $\widehat{k}_{2}\neq 0$, then it is a
magnetic curve for $F_{-\delta \sqrt{\widehat{k}_{1}^{2}+\widehat{k}_{2}^{2}}%
+\frac{2\varepsilon \widehat{k}_{2}}{\sqrt{\widehat{k}_{1}^{2}+\widehat{k}%
_{2}^{2}}}}$ with respect to $\widehat{\nabla }$, where $\delta =sgn(g(\phi
E_{1},E_{2}))$\ and $\varepsilon =sgn(\cos \theta )$.

(e) Except above cases, $\alpha $ cannot be a pseudo-Hermitian magnetic
curve for any $F_{q}$.
\end{theorem}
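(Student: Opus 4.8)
The plan is to hold the magnetic equation (\ref{lorentz-tw2}), $\widehat{\nabla}_{E_{1}}E_{1}=(-q+2\cos\theta)\phi E_{1}$, against the Frenet equations (\ref{Frenetequations}). A pseudo-Hermitian $\phi$-helix has $\widehat{\nabla}_{E_{1}}E_{1}=\widehat{k}_{1}E_{2}$ with constant $\widehat{k}_{i}$, so it is pseudo-Hermitian magnetic for some $F_{q}$ precisely when a real $q$ exists with $\widehat{k}_{1}E_{2}=(-q+2\cos\theta)\phi E_{1}$; if $\widehat{k}_{1}\neq0$ this says $\phi E_{1}$ is a scalar multiple of $E_{2}$, and if $\widehat{k}_{1}=0$ it says $\phi E_{1}=0$ or $q=2\cos\theta$. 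By Proposition \ref{prop-slant} a curve which is not slant is never pseudo-Hermitian magnetic, so a non-slant $\phi$-helix lies in case (e); hence I may take $\eta(E_{1})=\cos\theta$ constant. Differentiating $\eta(E_{1})=\cos\theta$ and using $\widehat{\nabla}\xi=0$ gives $\widehat{k}_{1}\eta(E_{2})=0$, hence $\eta(E_{2})=0$ once $\widehat{k}_{1}\neq0$; I also record the identities $g(\phi E_{1},E_{1})=\eta(\phi E_{1})=0$.

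Now I split on the osculating order. If $r=1$ ($\alpha$ a $\phi$-geodesic): for $\cos\theta=\pm1$ we get $E_{1}=\pm\xi$, so $\phi E_{1}=0$ and (\ref{lorentz-tw2}) holds for every $q$, which is (a); for $0<|\cos\theta|<1$ we have $\phi E_{1}\neq0$, so (\ref{lorentz-tw2}) holds iff $q=2\cos\theta$, which is (b); for $\cos\theta=0$ the value $q=2\cos\theta=0$ is not allowed, so $\alpha$ is not magnetic, which is part of (e). If $r=2$: differentiating $\eta(E_{2})=0$ with $\widehat{\nabla}_{E_{1}}E_{2}=-\widehat{k}_{1}E_{1}$ and $\widehat{\nabla}\xi=0$ gives $\widehat{k}_{1}\cos\theta=0$, so $\alpha$ is Legendre; then $\{E_{1},E_{2},\xi\}$ is orthonormal, so $\phi E_{1}\perp E_{1}$ and $\phi E_{1}\perp\xi$ force $\phi E_{1}=\delta E_{2}$ with $\delta=sgn(g(\phi E_{1},E_{2}))$, and (\ref{lorentz-tw2}) then forces $q=-\delta\widehat{k}_{1}$, which is (c). If $r=3$: as a $\phi$-curve of order $3$, $\phi$ preserves $V=sp\{E_{1},E_{2},E_{3}\}$; were $\alpha$ Legendre then $\eta(E_{1})=\eta(E_{2})=\eta(E_{3})=0$, so $\xi\perp V$, whence $\phi^{2}X=-X$ for all $X\in V$, impossible since $\dim V=3$; hence $\cos\theta\neq0$. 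Differentiating $\eta(E_{2})=0$ gives $\eta(E_{3})=\widehat{k}_{1}\cos\theta/\widehat{k}_{2}$, and then $\eta(\phi E_{1})=0$ forces $\phi E_{1}=\delta\sin\theta\,E_{2}$ with $\delta=sgn(g(\phi E_{1},E_{2}))$; differentiating this and using $\widehat{\nabla}_{E_{1}}\phi E_{1}=\widehat{k}_{1}\phi E_{2}$ gives $\phi E_{2}=-\delta\sin\theta\,E_{1}+(\delta\sin\theta\,\widehat{k}_{2}/\widehat{k}_{1})E_{3}$, and substituting into $\phi^{2}E_{1}=-E_{1}+\cos\theta\,\xi$ gives $\cos\theta\,\xi=\cos^{2}\theta\,E_{1}+(\sin^{2}\theta\,\widehat{k}_{2}/\widehat{k}_{1})E_{3}$; taking norms and using $|\xi|=1$ yields $\cos^{2}\theta(\widehat{k}_{1}^{2}+\widehat{k}_{2}^{2})=\widehat{k}_{2}^{2}$, so $\cos\theta=\varepsilon\widehat{k}_{2}/\sqrt{\widehat{k}_{1}^{2}+\widehat{k}_{2}^{2}}$ and $\sin\theta=\widehat{k}_{1}/\sqrt{\widehat{k}_{1}^{2}+\widehat{k}_{2}^{2}}$. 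Since $\widehat{k}_{1}E_{2}=(\delta\widehat{k}_{1}/\sin\theta)\phi E_{1}=\delta\sqrt{\widehat{k}_{1}^{2}+\widehat{k}_{2}^{2}}\,\phi E_{1}$, equation (\ref{lorentz-tw2}) holds exactly for $-q+2\cos\theta=\delta\sqrt{\widehat{k}_{1}^{2}+\widehat{k}_{2}^{2}}$, i.e. for the $F_{q}$ of (d). The only $\phi$-helix of order $\leq3$ outside (a)--(d) is therefore the Legendre $\phi$-geodesic, which by the $r=1$ analysis is magnetic for no admissible $F_{q}$, proving (e).

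For the converse inclusions in (a)--(d) I would substitute the stated $q$ and Frenet data back into (\ref{lorentz-tw2}) and check equality, exactly as in the closing paragraph of the proof of Theorem \ref{Theorem1}; this and the sign bookkeeping with $\delta,\varepsilon$ are the routine parts. The main obstacle is the $r=3$ analysis: one must recover the full Frenet apparatus of a $\phi$-helix --- the expansions of $\phi E_{1}$ and $\phi E_{2}$ in the frame and the exact location of $\xi$ in the osculating $3$-plane --- from $\phi$-invariance together with the $\widehat{\nabla}$-parallelism of $\phi,\xi,\eta,g$ alone, and in particular see that the angle relation $\cos^{2}\theta(\widehat{k}_{1}^{2}+\widehat{k}_{2}^{2})=\widehat{k}_{2}^{2}$ is forced rather than assumed and that there is no Legendre $\phi$-helix of order $3$; once this structure is in hand, matching with (\ref{lorentz-tw2}) and solving for $q$ is immediate.
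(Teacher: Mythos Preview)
Your argument is correct and in fact carries out more than the paper's own proof does. The paper proceeds case by case just as you do for (a)--(c), but for (c) and (d) it simply \emph{asserts} the Frenet frame $\{E_{1},\delta\phi E_{1}\}$, respectively $\{E_{1},\frac{\delta}{\sin\theta}\phi E_{1},\frac{\varepsilon}{\sin\theta}(\xi-\cos\theta E_{1})\}$, and then says ``after calculations'' (\ref{lorentz-tw2}) holds for the stated $q$; for (e) it does not analyse anything directly but invokes the classification Theorem~\ref{Theorem1}. Your route is different in two respects. First, you \emph{derive} the Frenet frame from the $\phi$-invariance of the osculating space together with $\widehat{\nabla}\phi=\widehat{\nabla}\xi=0$, rather than quoting it. Second, and more substantively, in the $r=3$ case you show that the angle relation $\cos^{2}\theta(\widehat{k}_{1}^{2}+\widehat{k}_{2}^{2})=\widehat{k}_{2}^{2}$ is \emph{forced} by the $\phi$-helix hypothesis (and that no Legendre $\phi$-helix of order $3$ exists, via the parity obstruction $\phi^{2}|_{V}=-I$ on a $3$-dimensional $V$); this lets you prove (e) directly, without appealing to Theorem~\ref{Theorem1}, since the only slant $\phi$-helix of order $\leq 3$ left outside (a)--(d) is the Legendre $\phi$-geodesic, for which $q=2\cos\theta=0$ is excluded. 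The paper's approach is shorter because Theorem~\ref{Theorem1} has already done the structural work; yours is self-contained and yields the extra information that the hypothesis in (d) is automatic for slant $\phi$-helices of order $3$.
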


\begin{proof}
Firstly, let us assume $\cos \theta =\pm 1$, that is, $E_{1}=\pm \xi $. As a
result, we have 
\begin{equation*}
\widehat{\nabla }_{E_{1}}E_{1}=0,\text{ }\phi E_{1}=0.
\end{equation*}%
Hence, equation (\ref{lorentz-tw2}) is satisfied for arbitrary $q$. This
proves $(a)$. Now, let us take $\cos \theta \notin \left\{ -1,0,1\right\} $
and $\widehat{k}_{1}=0$. In this case, we obtain%
\begin{equation*}
\widehat{\nabla }_{E_{1}}E_{1}=0,\text{ }\phi E_{1}\neq 0.
\end{equation*}%
So equation (\ref{lorentz-tw2}) is valid for $q=2\cos \theta $. The proof of 
$(b)$ is over. Next, let us assume $\cos \theta =0$ and $\widehat{k}_{1}\neq
0$. One can easily see that $\alpha $ has the Frenet frame field (for $%
\widehat{\nabla }$) 
\begin{equation*}
\left\{ E_{1},\delta \phi E_{1}\right\} 
\end{equation*}%
where $\delta $ corresponds to the sign of $g(\phi E_{1},E_{2})$.
Consequently, we get%
\begin{equation*}
\widehat{\nabla }_{E_{1}}E_{1}=\delta \widehat{k}_{1}\phi E_{1},
\end{equation*}%
that is, $\alpha $ is a pseudo-Hermitian magnetic for $q=-\delta \widehat{k}%
_{1}$. We have just proven $(c)$. Finally, let $\cos \theta =\frac{%
\varepsilon \widehat{k}_{2}}{\sqrt{\widehat{k}_{1}^{2}+\widehat{k}_{2}^{2}}}$
and $\widehat{k}_{2}\neq 0$. So $\alpha $ has the Frenet frame field (for $%
\widehat{\nabla }$)%
\begin{equation*}
\left\{ E_{1},\frac{\delta }{\sin \theta }\phi E_{1},\frac{\varepsilon }{%
\sin \theta }\left( \xi -\cos \theta E_{1}\right) \right\} ,
\end{equation*}%
where $\delta =sgn(g(\phi E_{1},E_{2}))$\ and $\varepsilon =sgn(\cos \theta )
$. After calculations, it is easy to show that equation (\ref{lorentz-tw2})
is satisfied for $q=-\delta \sqrt{\widehat{k}_{1}^{2}+\widehat{k}_{2}^{2}}+%
\frac{2\varepsilon \widehat{k}_{2}}{\sqrt{\widehat{k}_{1}^{2}+\widehat{k}%
_{2}^{2}}}$. Hence, the proof of $(d)$ is completed. Except above cases,
from Theorem \ref{Theorem1}, $\alpha $ cannot be a pseudo-Hermitian magnetic
curve for any $F_{q}$.
\end{proof}

\section{Parametrizations of pseudo-Hermitian magnetic curves in $%
\mathbb{R}
^{2n+1}(-3)$\label{param}}

In this section, our aim is to obtain parametrizations of pseudo-Hermitian
magnetic curves in $%
\mathbb{R}
^{2n+1}(-3)$. To do this, we need to recall some notions from \cite{Blair}.
Let $N=%
\mathbb{R}
^{2n+1}$. Let us denote the coordinate functions of $N$ with $\left(
x_{1},...,x_{n},y_{1},...,y_{n},z\right) $. One may define a structure on $N$
by $\eta =\frac{1}{2}(dz-\overset{n}{\underset{i=1}{\sum }}y_{i}dx_{i})$,
which is a contact structure, since $\eta \wedge (d\eta )^{n}\neq 0$. This
contact structure has the characteristic vector field $\xi =2\frac{\partial 
}{\partial z}$. Let us also consider a $(1,1)$-type tensor field $\phi $
given by the matrix form as%
\begin{equation*}
\phi =\left[ 
\begin{array}{ccc}
0 & \delta _{ij} & 0 \\ 
-\delta _{ij} & 0 & 0 \\ 
0 & y_{j} & 0%
\end{array}%
\right] .
\end{equation*}%
Finally, let us take the Riemannian metric on $N$ given by $g=\eta \otimes
\eta +\frac{1}{4}\overset{n}{\underset{i=1}{\sum }}%
((dx_{i})^{2}+(dy_{i})^{2})$. It is known that $(N,\phi ,\xi ,\eta ,g)$ is a
Sasakian space form and its $\phi $-sectional curvature is $c=-3$. This
special Sasakian space form is denoted by $\mathbb{R}^{2n+1}(-3)$ \cite%
{Blair}. One can easily show that the vector fields 
\begin{equation}
X_{i}=2\frac{\partial }{\partial y_{i}},\text{ }X_{n+i}=\phi X_{i}=2(\frac{%
\partial }{\partial x_{i}}+y_{i}\frac{\partial }{\partial z}),\text{ }i=%
\overline{1,n},\text{ }\xi =2\frac{\partial }{\partial z}  \label{b5}
\end{equation}%
are $g$-unit and $g$-orthogonal. Hence, they form a $g$-orthonormal basis 
\cite{Blair}. Using this basis, the Levi-Civita connection of $\mathbb{R}%
^{2n+1}(-3)$ can be obtained as%
\begin{equation*}
\nabla _{X_{i}}X_{j}=\nabla _{X_{m+i}}X_{m+j}=0,\nabla
_{X_{i}}X_{m+j}=\delta _{ij}\xi ,\nabla _{X_{m+i}}X_{j}=-\delta _{ij}\xi ,
\end{equation*}%
\begin{equation*}
\nabla _{X_{i}}\xi =\nabla _{\xi }X_{i}=-X_{m+i},\nabla _{X_{m+i}}\xi
=\nabla _{\xi }X_{m+i}=X_{i},
\end{equation*}%
(see \cite{Blair}). As a result, the Tanaka-Webster connection of $\mathbb{R}%
^{2n+1}(-3)$ is 
\begin{eqnarray*}
\widehat{\nabla }_{X_{i}}X_{j} &=&\widehat{\nabla }_{X_{m+i}}X_{m+j}=%
\widehat{\nabla }_{X_{i}}X_{m+j}=\widehat{\nabla }_{X_{m+i}}X_{j}= \\
\widehat{\nabla }_{X_{i}}\xi  &=&\widehat{\nabla }_{\xi }X_{i}=\widehat{%
\nabla }_{X_{m+i}}\xi =\widehat{\nabla }_{\xi }X_{m+i}=0,
\end{eqnarray*}%
which was calculated in \cite{Guvenc}. Now, we can investigate the
parametric equations of pseudo-Hermitian magnetic curves in $\mathbb{R}%
^{2n+1}(-3)$ endowed with the Tanaka-Webster connection.

Let $N=\mathbb{R}^{2n+1}(-3)$ endowed with the Tanaka-Webster connection $%
\widehat{\nabla }$. Let $\alpha :I\subseteq 
\mathbb{R}
\rightarrow N,$ $\alpha =\left( \alpha _{1},\alpha _{2},...,\alpha
_{n},\alpha _{n+1},...,\alpha _{2n},\alpha _{2n+1}\right) $ be a
pseudo-Hermitian magnetic curve. Then, the tangential vector field of $%
\alpha $ can be written as%
\begin{equation*}
E_{1}=\sum_{i=1}^{n}\alpha _{i}^{\prime }\frac{\partial }{\partial x_{i}}%
+\sum_{i=1}^{n}\alpha _{n+i}^{\prime }\frac{\partial }{\partial y_{i}}%
+\alpha _{2n+1}^{\prime }\frac{\partial }{\partial z}.
\end{equation*}%
In terms of the $g$-orthonormal basis, $E_{1}$ is rewritten as%
\begin{equation*}
E_{1}=\frac{1}{2}\left[ \sum_{i=1}^{n}\alpha _{n+i}^{\prime
}X_{i}+\sum_{i=1}^{n}\alpha _{i}^{\prime }X_{n+i}+\left( \alpha
_{2n+1}^{\prime }-\sum_{i=1}^{n}\alpha _{i}^{\prime }\alpha _{n+i}\right)
\xi \right] .
\end{equation*}%
From Proposition \ref{prop-slant}, $\alpha $ is slant. Hence, we have 
\begin{equation*}
\eta (E_{1})=\cos \theta =constant,
\end{equation*}%
which is equivalent to%
\begin{equation}
\alpha _{2n+1}^{\prime }=2\cos \theta +\sum_{i=1}^{n}\alpha _{i}^{\prime
}\alpha _{n+i}.  \label{slantcondition}
\end{equation}%
From the fact that $\alpha $ is parametrized with arc-length, we also have%
\begin{equation*}
g(E_{1},E_{1})=1,
\end{equation*}%
that is,%
\begin{equation}
\sum_{i=1}^{2n}\left( \alpha _{i}^{\prime }\right) ^{2}=4\sin ^{2}\theta .
\label{unitcondition}
\end{equation}%
Differentiating $E_{1}$ with respect to $\widehat{\nabla }$, we obtain 
\begin{equation*}
\widehat{\nabla }_{E_{1}}E_{1}=\frac{1}{2}\left( \sum_{i=1}^{n}\alpha
_{n+i}^{\prime \prime }X_{i}+\sum_{i=1}^{n}\alpha _{i}^{\prime \prime
}X_{n+i}\right) .
\end{equation*}%
We also easily find%
\begin{equation*}
\phi E_{1}=\frac{1}{2}\left( -\sum_{i=1}^{n}\alpha _{i}^{\prime
}X_{i}+\sum_{i=1}^{n}\alpha _{n+i}^{\prime }X_{n+i}\right) .
\end{equation*}%
Since $\alpha $ is pseudo-Hermitian magnetic, it must satisfy%
\begin{equation*}
\widehat{\nabla }_{E_{1}}E_{1}=\left( -q+2\cos \theta \right) \phi E_{1}.
\end{equation*}%
Then, we can write%
\begin{equation*}
\frac{\alpha _{n+1}^{\prime \prime }}{-\alpha _{1}^{\prime }}=...=\frac{%
\alpha _{2n}^{\prime \prime }}{-\alpha _{n}^{\prime }}=\frac{\alpha
_{1}^{\prime \prime }}{\alpha _{n+1}^{\prime }}=...=\frac{\alpha
_{n}^{\prime \prime }}{\alpha _{2n}^{\prime }}=-q+2\cos \theta =-\lambda ,
\end{equation*}%
where $\lambda =q-2\cos \theta $. From these last equations, we can select
pairs%
\begin{equation}
\frac{\alpha _{n+1}^{\prime \prime }}{-\alpha _{1}^{\prime }}=\frac{\alpha
_{1}^{\prime \prime }}{\alpha _{n+1}^{\prime }},...,\frac{\alpha
_{2n}^{\prime \prime }}{-\alpha _{n}^{\prime }}=\frac{\alpha _{n}^{\prime
\prime }}{\alpha _{2n}^{\prime }}.  \label{**}
\end{equation}%
Firstly, let $\lambda \neq 0$. Solving the ODEs, we have%
\begin{equation*}
\left( \alpha _{i}^{\prime }\right) ^{2}+\left( \alpha _{n+i}^{\prime
}\right) ^{2}=c_{i}^{2},\text{ }i=1,...,n
\end{equation*}%
for some arbitrary constants $c_{i}$ $(i=1,...,n)$ such that 
\begin{equation*}
\sum_{i=1}^{n}c_{i}^{2}=4\sin ^{2}\theta .
\end{equation*}%
So we have%
\begin{equation*}
\alpha _{i}^{\prime }=c_{i}\cos f_{i},\text{ }\alpha _{n+i}^{\prime
}=c_{i}\sin f_{i}
\end{equation*}%
for some differentiable functions $f_{i}:I\rightarrow 
\mathbb{R}
$ $(i=1,...,n)$. From (\ref{**}), we get%
\begin{equation*}
\frac{\alpha _{n+i}^{\prime \prime }}{-\alpha _{i}^{\prime }}=-f_{i}^{\prime
}=-\lambda ,
\end{equation*}%
which gives us%
\begin{equation*}
f_{i}=\lambda t+d_{i},
\end{equation*}%
for some arbitrary constants $d_{i}$ $(i=1,...,n)$. Here, $t$ denotes the
arc-length parameter. Then, we find 
\begin{equation*}
\alpha _{i}^{\prime }=c_{i}\cos \left( \lambda t+d_{i}\right) ,\text{ }%
\alpha _{n+i}^{\prime }=c_{i}\sin \left( \lambda t+d_{i}\right) .
\end{equation*}%
Finally, we obtain 
\begin{equation*}
\alpha _{i}=\frac{c_{i}}{\lambda }\sin \left( \lambda t+d_{i}\right) +h_{i},
\end{equation*}%
\begin{equation*}
\alpha _{n+i}=\frac{-c_{i}}{\lambda }\cos \left( \lambda t+d_{i}\right)
+h_{n+i},
\end{equation*}%
\begin{eqnarray*}
\alpha _{2n+1} &=&2t\cos \theta +\sum_{i=1}^{n}\left\{ \frac{-c_{i}^{2}}{%
4\lambda ^{2}}\left[ 2\left( \lambda t+d_{i}\right) +\sin \left( 2\left(
\lambda t+d_{i}\right) \right) \right] \right.  \\
&&\text{ \ \ \ \ \ \ \ \ \ \ \ \ \ \ \ \ \ \ \ }\left. +\frac{c_{i}h_{n+i}}{%
\lambda }\sin \left( \lambda t+d_{i}\right) \right\} +h_{2n+1},
\end{eqnarray*}%
for some arbitrary constants $h_{i}$ $(i=1,...,2n+1).$

Secondly, let $\lambda =0$. In this case, $q=2\cos \theta $ and $\widehat{k}%
_{1}=0$. Hence, we have%
\begin{equation*}
\widehat{\nabla }_{E_{1}}E_{1}=\frac{1}{2}\left( \sum_{i=1}^{n}\alpha
_{n+i}^{\prime \prime }X_{i}+\sum_{i=1}^{n}\alpha _{i}^{\prime \prime
}X_{n+i}\right) =0,
\end{equation*}%
which gives us%
\begin{equation*}
\alpha _{i}=c_{i}t+d_{i},\text{ }i=1,...,2n,
\end{equation*}%
\begin{equation*}
\alpha _{2n+1}=2t\cos \theta +\sum_{i=1}^{n}c_{i}\left( \frac{c_{n+i}}{2}%
t^{2}+d_{n+i}t\right) +c_{2n+1},
\end{equation*}%
where $c_{i}$ $(i=1,2,...,2n+1)$ and $d_{i}$ $(i=1,2,...,2n)$ are arbitrary
constants such that%
\begin{equation*}
\sum_{i=1}^{2n}c_{i}^{2}=4\sin ^{2}\theta .
\end{equation*}%
To conclude, we can state the following theorem:

\begin{theorem}
The pseudo-Hermitian magnetic curves on $%
\mathbb{R}
^{2n+1}(-3)$ endowed with the Tanaka-Webster connection have the parametric
equations 
\begin{equation*}
\alpha :I\subseteq 
\mathbb{R}
\rightarrow 
\mathbb{R}
^{2n+1}(-3),\text{ }\alpha =\left( \alpha _{1},\alpha _{2},...,\alpha
_{n},\alpha _{n+1},...,\alpha _{2n},\alpha _{2n+1}\right) ,
\end{equation*}%
where $\alpha _{i}$ $(i=1,...,2n+1)$ satisfies either

a) 
\begin{equation*}
\alpha _{i}=\frac{c_{i}}{\lambda }\sin \left( \lambda t+d_{i}\right) +h_{i},
\end{equation*}%
\begin{equation*}
\alpha _{n+i}=\frac{-c_{i}}{\lambda }\cos \left( \lambda t+d_{i}\right)
+h_{n+i},
\end{equation*}%
\begin{eqnarray*}
\alpha _{2n+1} &=&2\cos \theta t+\sum_{i=1}^{n}\left\{ \frac{-c_{i}^{2}}{%
4\lambda ^{2}}\left[ 2\left( \lambda t+d_{i}\right) +\sin \left( 2\left(
\lambda t+d_{i}\right) \right) \right] \right.  \\
&&\text{ \ \ \ \ \ \ \ \ \ \ \ \ \ \ \ \ \ \ \ }\left. +\frac{c_{i}h_{n+i}}{%
\lambda }\sin \left( \lambda t+d_{i}\right) \right\} +h_{2n+1},
\end{eqnarray*}%
where $\lambda =q-2\cos \theta \neq 0,$ $c_{i},$ $d_{i}$ $(i=1,...,n)$ and $%
h_{i}$ $(i=1,...,2n+1)$ are arbitrary constants such that%
\begin{equation*}
\sum_{i=1}^{n}c_{i}^{2}=4\sin ^{2}\theta ;
\end{equation*}%
or

b) 
\begin{equation*}
\alpha _{i}=c_{i}t+d_{i},
\end{equation*}%
\begin{equation*}
\alpha _{2n+1}=2t\cos \theta +\sum_{i=1}^{n}c_{i}\left( \frac{c_{n+i}}{2}%
t^{2}+d_{n+i}t\right) +c_{2n+1},
\end{equation*}%
where $q=2\cos \theta \ $and $c_{i}$ $(i=1,2,...,2n+1)$, $d_{i}$ $%
(i=1,2,...,2n)$ are arbitrary constants such that%
\begin{equation*}
q^{2}+\sum_{i=1}^{2n}c_{i}^{2}=4\text{.}
\end{equation*}
\end{theorem}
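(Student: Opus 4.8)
The plan is to convert the Lorentz equation \eqref{lorentz-tw2} into a system of ordinary differential equations for the coordinate functions $\alpha_i$ and then integrate it. First I would write $E_1=\alpha'$ in the coordinate frame and re-expand it in the global $g$-orthonormal frame $\{X_i,X_{n+i},\xi\}$ of \eqref{b5}; since $X_{n+i}=2(\partial/\partial x_i+y_i\,\partial/\partial z)$, the $\xi$-component of $E_1$ comes out to $\tfrac12(\alpha_{2n+1}'-\sum_i\alpha_i'\alpha_{n+i})$, which by Proposition \ref{prop-slant} equals the constant $\cos\theta$. This is exactly the first-order relation \eqref{slantcondition} for $\alpha_{2n+1}'$, and the unit-speed hypothesis contributes $\sum_{i=1}^{2n}(\alpha_i')^2=4\sin^2\theta$ as in \eqref{unitcondition}.

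Next I would differentiate $E_1$ with $\widehat\nabla$. Because all Tanaka--Webster coefficients of the frame $\{X_i,X_{n+i},\xi\}$ on $\mathbb{R}^{2n+1}(-3)$ vanish (as computed in \cite{Guvenc}), $\widehat\nabla_{E_1}E_1$ is the frame-wise derivative, and its $\xi$-term drops out because $\eta(E_1)$ is constant, leaving $\widehat\nabla_{E_1}E_1=\tfrac12(\sum_i\alpha_{n+i}''X_i+\sum_i\alpha_i''X_{n+i})$; similarly $\phi E_1=\tfrac12(-\sum_i\alpha_i'X_i+\sum_i\alpha_{n+i}'X_{n+i})$. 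Substituting into \eqref{lorentz-tw2} and matching the coefficients of each $X_i$ and $X_{n+i}$ yields, for every $i\in\{1,\dots,n\}$, the decoupled planar system $\alpha_i''=-\lambda\,\alpha_{n+i}'$, $\alpha_{n+i}''=\lambda\,\alpha_i'$ with $\lambda=q-2\cos\theta$, and no further constraint on $\alpha_{2n+1}$ beyond \eqref{slantcondition}.

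It then remains to solve these ODEs in the two cases. For $\lambda\neq0$ one checks that $(\alpha_i')^2+(\alpha_{n+i}')^2$ is constant along $\alpha$, so it may be written as $c_i^2$, and one sets $\alpha_i'=c_i\cos f_i$, $\alpha_{n+i}'=c_i\sin f_i$; the system then forces $f_i'=\lambda$, hence $f_i=\lambda t+d_i$, and two integrations give $\alpha_i$ and $\alpha_{n+i}$. Feeding these into \eqref{slantcondition} and integrating — the only slightly delicate step, since the cross term $\int\alpha_i'\alpha_{n+i}\,dt$ must be expanded with a product-to-sum identity and the integration constants $h_i$ tracked carefully — produces the stated closed form for $\alpha_{2n+1}$, while \eqref{unitcondition} becomes $\sum_i c_i^2=4\sin^2\theta$. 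For $\lambda=0$ one has $q=2\cos\theta$ and $\widehat k_1=0$, so $\widehat\nabla_{E_1}E_1=0$ forces each $\alpha_i$ ($i\le 2n$) to be affine in $t$, and a single integration of \eqref{slantcondition} gives the quadratic expression for $\alpha_{2n+1}$; here \eqref{unitcondition} reads $\sum_{i=1}^{2n}c_i^2=4\sin^2\theta$, equivalently $q^2+\sum_i c_i^2=4$ after using $q=2\cos\theta$. Finally, the converse is a direct verification that a curve of either form satisfies \eqref{lorentz-tw2}. I do not anticipate any genuine obstacle: everything reduces to a determined ODE computation, and the only place demanding real attention is the integration producing $\alpha_{2n+1}$.
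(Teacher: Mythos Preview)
Your proposal is correct and follows essentially the same approach as the paper: express $E_1$ in the orthonormal frame $\{X_i,X_{n+i},\xi\}$, use the slant and unit-speed conditions to obtain \eqref{slantcondition} and \eqref{unitcondition}, exploit the vanishing of all Tanaka--Webster coefficients of that frame to reduce \eqref{lorentz-tw2} to the decoupled planar systems $\alpha_i''=-\lambda\alpha_{n+i}'$, $\alpha_{n+i}''=\lambda\alpha_i'$, and then integrate in the two cases $\lambda\neq0$ and $\lambda=0$. The only addition you make is noting the (straightforward) converse verification, which the paper leaves implicit.
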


\end{document}